\theoremstyle{plain}
\newtheorem{thm}{\protect\theoremname}
  \theoremstyle{plain}
  \newtheorem{lem}[thm]{\protect\lemmaname}
  \theoremstyle{plain}
  \newtheorem{cor}[thm]{\protect\corollaryname}
\author{Mic Bowman\\The Pennsylvania State University \and Saumya K.~Debray and Larry L.~Peterson\\The University of Arizona}
  \providecommand{\corollaryname}{Corollary}
  \providecommand{\lemmaname}{Lemma}
\providecommand{\theoremname}{Theorem}
  \providecommand{\corollaryname}{Corollary}
  \providecommand{\lemmaname}{Lemma}
\providecommand{\theoremname}{Theorem}
\begin{document}

\title{Steady state for the subcritical contact branching random walk on
the lattice with the arbitrary number of offspring and with immigration\foreignlanguage{english}{}\thanks{\selectlanguage{english}%
The study has been funded by the Russian Science Foundation (project
No.~17-11-01098)\selectlanguage{british}%
}\foreignlanguage{english}{ }}

\author{{\normalsize{}{}Elena Chernousova }{\small{}{}}\\
 {\small{}{}Department of Mathematical Basics of Control}\\
 {\small{}{}\vspace{0.3cm}
Moscow Institute of Physics and Technology, Dolgoprudny, Russian Federation}\\
 {\normalsize{}{}Yaqin Feng}{\small{}{}}\\
 {\small{}{}Department of mathematics }\\
 {\small{}{}\vspace{0.3cm}
Ohio University, Athens, Ohio 45701,USA}\\
 {\normalsize{}{}Stanislav Molchanov }{\small{}{}}\\
 {\small{}{}Department of Mathematics and Statistics }\\
 {\small{}{}University of North Carolina at Charlotte, Charlotte,
NC 28223,USA, }\\
 {\small{}{}\vspace{0.3cm}
National Research University, Higher School of Economics, Russian
Federation}\\
 {\normalsize{}{}Joseph Whitmeyer}{\small{}{}}\\
 {\small{}{}Department of Sociology}\\
 {\small{}{} University of North Carolina at Charlotte, Charlotte,
NC 28223,USA}}
\maketitle
\begin{abstract}
We consider the subcritical contact branching random walk on $\mathbb{Z}^{d}$
in continuous time with the arbitrary number of offspring and with
immigration. We prove the existence of the steady state (statistical
equilibrium). 
\end{abstract}

\section{Introduction}

This paper is the continuation of our previous publication \cite{CE}.
As in the majority of the publications in the area of population dynamics,
we considered binary splitting in \cite{CE}. During time interval
$[t,t+dt]$, each particle in our population either dies with probability
$\mu dt$ or produces with probability $\beta dt$ one offspring which
jumps from the parental particle at the size $x\mathbb{\in Z}^{d}$
to the random point $x+z$ with probability $b(z)$. We assume $b(z)=b(-z)$
and $\sum_{z\in\mathbb{Z}^{d}}b(z)=1.$ In other terms, the infinitesimal
generating function describing the branching has a form $\phi(z)=\beta z^{2}-(\beta+\mu)z+\mu$.
\cite{SM_JW_2016,SM_JW_2017,Koralov_2013,KPP,key-4} are also based
on the binary splitting.

But in many applications, especially in the model of forest introduced
in \cite{key-4}, where the particles(i.e.trees) do not move at all
but produce the seeds which are randomly distributed around the parental
tree (we introduce this option in our more general model (\ref{backward Kolmogorov  equations})),
the assumption of the binary splitting is highly artificial. In contrast,
the natural assumption here is that typical number of seeds is large
(of order hundreds and thousands). i.e. The infinitesimal generating
function has now a form $\phi(z)=\sum_{l=2}^{\infty}\beta_{l}\psi_{z}^{l}-(\sum_{l=2}^{\infty}\beta_{l}+\mu)\psi_{z}+\mu$.
It is well known that for heavy tailed distribution $\beta_{l},$
the branching process can explode. Since we will use low moment analysis
and the Carleman type conditions for the uniqueness of the solution
of the moments problem, we will assume that$\{\beta_{l},l\ge2\}$
have geometrically decay, i.e. $\phi(z)$ is analytic in the circle
$|z|\leq1+\delta,\delta>0.$

Let us note that for the arbitrary number of offspring, corresponding
moments ( mean numbers of offspring, variance of this number etc.)
can be essentially different. It leads to important phenomenon of
the intermittency in the model of the forest.

In this paper, we study the steady state for the subcritical branching
random walk on the lattice with the arbitrary number of offspring.
It is not only the natural coninutaion of the publication \cite{CE},
which consider the binary splitting. It is also a natural continuation
of the pubilication \cite{CE_SM}, which study the convergence of
the population to the statistical equilibrium for critical contact
process on the lattice $\mathbb{Z}^{d}$. We consider the general
model of the subcritical branching random walk on the lattice $\mathbb{Z}^{d}$.
The structure of this paper is as follows:\foreignlanguage{english}{
In section 2, we introduce our model, containing the random walk with
generator $\mathcal{L}_{a}$, mortality rate $\mu$, splitting rate
with arbitrary number of offspring and their distribution around parental
particle with some law and immigration rate $\gamma$. We provide
in this section several technical lemmas. In section 3, we prove a
Carleman type estimate for the cumulants of subcritical populations
and prove the existence of a steady state. The last section contains
the summary.}

\section{Description of the model}

Let $N(t,y)$ to be the particles field on the lattice $\mathbb{Z}^{d}$
with continuous time $t\geq0$, i.e. $N(t,y)$ is the numbers of population
at site $y\in\mathbb{Z}^{d}$ at the moment $t$. The evolution of
this particle field consists of several elements: 
\begin{itemize}
\item Each particle independently on others performs (until the transformation:
death or splitting) the random walk $X(t)$ with the generator $\mathcal{L}_{a}$,
where 
\begin{equation}
\kappa\mathit{\mathcal{L}_{a}}f(x)=\kappa\sum_{z\in\mathbb{Z}^{d}\setminus\{0\}}(f(x+z)-f(z))a(z).\label{generator}
\end{equation}
We assume that 
\begin{eqnarray*}
a(z)=a(-z)\ge0,\sum_{z\neq0,z\in\mathbb{Z}^{d}}a(z)=1
\end{eqnarray*}
and the random walk $X(t)$ is supported on the full lattice (but
not on some sub-lattice). It means that $\forall y\in\mathbb{Z}^{d},$
there is some integer $k\ge1$, there are some vectors $z_{1},\cdots,z_{k}$
and there are some positive integers $n_{1},\cdots,n_{k}$ such that
$y=\sum_{i=1}^{k}n_{i}z_{i}$ and $a(z_{i})>0$ for $i=1,\ldots,k$.
\item Each particle in the site $x$ during the time interval $[t,t+dt]$
(independently on others and past time) can annihilate (die) with
probability $\mu\,dt$ or splits onto $l$ particles with probabilities
$\beta_{l}\,dt$ where $l\ge2$. In such splitting, one offspring
(it can be considered the parental particle) remains at $x$ and the
other $l-1$~particles jump independently from $x$ to $x+v$ with
probability distribution $b(v)$, where $b(v)=b(-v)$ and $\sum_{v\in\mathbb{Z}^{d}\setminus\{0\}}b(v)=1$.
We assume that 
\begin{equation}
\Delta=\mu-\sum_{l=2}^{\infty}(l-1)\beta_{l}>0.\label{Delta}
\end{equation}
\item We also assume that for any site $x$, the new particles (immigrants)
appear at the moments $0<\tau_{1}(x)<\tau_{2}(x)\cdots$ and $\tau_{i+1}(x)-\tau_{i}(x)\sim Exp(\gamma)$.
In different terms, moments $\tau_{i}(x),$ $i\geq1$ form a Poissonian
point field on $\{x\}\times[0,\infty)$ with parameter $\gamma$.
Meanwhile, we assume the independence of such point fields for different
$x\in\mathbb{Z}^{d}.$\\

\selectlanguage{english}%
Let $n(t-\tau_{i}(x),x,y)$ denote the subpopulation, i.e., the number
of particles, at site $y\in\mathbb{Z}^{d}$ at time $t$ descended
from a particle that appeared at $x$ (immigrated) at time $\tau_{i}(x)<t$.
\foreignlanguage{british}{Without loss of generality, we can assume
that $N(0,y)\equiv0$, since all subpopulation starting at the moment
$t=0$ will vanish to the large moment $t$ with probability at least
$e^{-\Delta t}$. As a result, we have the following important representation
\begin{eqnarray}
N(t,y)=\sum_{x\in\mathbb{Z}^{d}}\sum_{\tau_{i}(x)\leq t}n(t-\tau_{i}(x),x,y),\label{representation of N(t,y)}
\end{eqnarray}
where subpopulation $n(t-\tau_{i}(x),x,y)$ are independent for different
$x\in\mathbb{Z}^{d}$ and $\tau_{i}\leq t$. 
\begin{eqnarray*}
N(t,y) & \underset{=}{\text{Law}} & \sum_{x\in\mathbb{Z}^{d}}\sum_{\tau_{i}(x)\leq t}n(t-\tau_{i}(x),x,y),\\
 & \underset{=}{\text{Law}} & \sum_{x\in\mathbb{Z}^{d}}\sum_{\xi_{1}+\cdots+\xi_{k}\leq t}n(\xi_{1}+\cdots+\xi_{k},x,y),
\end{eqnarray*}
where $\xi_{i}\sim Exp(\gamma)$. }
\end{itemize}
\selectlanguage{british}%
Let us consider the subpopulation $n(t,x,y)$. We introduce the generating
function for an individual subpopulation 
\begin{equation}
u_{z}(t,x;y)=\mathbb{E}z^{n(t,x,y)}.\label{eq:defintion}
\end{equation}
We hereafter consider this as a function of the variables $t$ and
$x$. For every fixed $y\in\mathbb{Z}^{d}$, $u_{z}(t,x;y)$ satisfies
the backward Kolmogorov equation (where we omit the arguments $(t,x;y)$):
\begin{equation}
\frac{\partial u_{z}}{\partial t}=\kappa\mathcal{L}_{a}u_{z}-\left(\mu+\sum_{l=2}^{\infty}\beta_{l}\right)u_{z}+\mu+u_{z}\sum_{l=2}^{\infty}\beta_{l}\left(u_{z}*b\right)^{l-1}
\label{backward Kolmogorov  equations}
\end{equation}
with initial condition $u_{z}(0,x;y)=z$ if $x=y$ and $u_{z}(0,x;y)=1$
otherwise. Here, we use the following designation for the convolution
of two functions 
\begin{equation}
u_{z}*b=(u_{z}*b)(t,x;y)=\sum_{v\in\mathbb{Z}^{d}}u_{z}(t,x-v;y)b(v).\label{convolution}
\end{equation}

From ~\eqref{backward Kolmogorov  equations} we can derive equations
for the factorial moments 
\begin{equation}
m_{k}(t,x;y)=\mathbb{E}\left[n(t,x,y)(n(t,x,y)-1)\cdots(n(t,x,y)-k+1)\right]=\frac{\partial^{k}u_{z}}{\partial z^{k}}\biggr|_{z=1}(t,x;y),\label{def k factorial moment}
\end{equation}
where $k=1,2,\ldots$ In particular, by differentiating Eq.~\eqref{backward Kolmogorov  equations}
we obtain an equation for the first moment: 
\begin{equation}
\begin{split} & \frac{\partial m_{1}}{\partial t}=\left(\kappa\mathcal{L}_{a}+\sum_{l=2}^{\infty}(l-1)\beta_{l}\mathcal{L}_{b}\right)m_{1}+\Delta m_{1},\\
 & m_{1}(0,x;y)=\delta(x-y).
\end{split}
\label{eq m_1}
\end{equation}
Here, $\mathcal{L}_{b}$ is defined as (similarly to Eq.~\eqref{generator}):
\begin{equation}
\mathcal{L}_{b}f=\left(\mathcal{L}_{b}f\right)(x)=\sum_{v\neq0}b(v)\left[f(x+v)-f(x)\right].
\end{equation}
The solution of \eqref{eq m_1} is: 
\begin{equation}
m_{1}(t,x,y)=e^{-\Delta t}p(t,x,y),\label{m_1}
\end{equation}
where $p(t,x,y)$ (fundamental solution) is the transition probability
of the event that a particle that starts at~$x\in\mathbb{Z}^{d}$
arrives at~$y\in\mathbb{Z}^{d}$ during time $t>0$ for the random
walk which is defined by the symmetric isotropic generator $\kappa\mathcal{L}_{a}+\sum_{l=2}^{\infty}(l-1)\beta_{l}\mathcal{L}_{b}$.
i.e. $p(t,x,y)$ satisfies the following equation

\begin{equation}
\begin{split} & \frac{\partial p(t,x,y)}{\partial t}=\left(\kappa\mathcal{L}_{a}+\sum_{l=2}^{\infty}(l-1)\beta_{l}\mathcal{L}_{b}\right)p(t,x,y)\\
 & p(0,x,y)=\delta_{x}(y).
\end{split}
\label{p(t,x,y)-1}
\end{equation}

\selectlanguage{english}%
Denote $\hat{p}(t,k,0)=\sum_{x}e^{ikx}p(t,x,0)$. Applying Fourier
transform on both side of the Eq. \ref{p(t,x,y)-1}, we have

\begin{eqnarray*}
\frac{\partial\hat{p}(t,k,0)}{\partial t} & =\left(\kappa\mathcal{\hat{L}}_{a}(k)+\sum_{l=2}^{\infty}(l-1)\beta_{l}\mathcal{\hat{L}}_{b}(k)\right)\hat{p}(t,k,0)
\end{eqnarray*}
where $\mathcal{\hat{L}}_{a}(k)=1-\hat{a}(k)$ , $\mathcal{\hat{L}}_{b}(k)=1-\hat{b}(k)$,
$\hat{a}(k)=\sum_{z}cos(k,z)a(z)$, $\hat{b}(k)=\sum_{z}cos(k,z)b(z)$.
As a result, 
\begin{eqnarray*}
\hat{p}(t,k,0) & = & e^{t\left(\kappa\mathcal{\hat{L}}_{a}(k)+\sum_{l=2}^{\infty}(l-1)\beta_{l}\mathcal{\hat{L}}_{b}(k)\right)}.
\end{eqnarray*}
Therefore, the transition probability of the underlying random walk
has the form
\begin{eqnarray*}
p(t,x,y) & =\frac{1}{\left(2\pi\right)^{d}}\int_{T^{d}} & e^{t\left(\kappa\mathcal{\hat{L}}_{a}(k)+\sum_{l=2}^{\infty}(l-1)\beta_{l}\mathcal{\hat{L}}_{b}(k)\right)}e^{-ik(x-y)}dk,
\end{eqnarray*}
 and $T^{d}=[-\pi,\pi]^{d}.$

\selectlanguage{british}%
Note that 

\begin{equation}
\sum_{y\in\mathbb{Z}^{d}}p(t,x,y)=1,\label{p(t,x,y)-2-1}
\end{equation}

\begin{equation}
p(t,x,y)\leq p(t,x,x)=p(t,0,0)\label{p(t,x,y)-2}
\end{equation}

For each $x\in\mathbb{Z}^{d}$, $\nu_{x}(t)=\sum_{y\in\mathbb{Z}^{d}}n(t,x,y)$
is a~Galton-Watson process, see~\cite{Sevast}. We have the well
known equation for the generating function of this process $\psi_{z}(t):=Ez^{\nu_{x}(t)}:$
\[
\begin{split} & \frac{\partial\psi_{z}}{\partial t}=\sum_{l=2}^{\infty}\beta_{l}\psi_{z}^{l}-(\sum_{l=2}^{\infty}\beta_{l}+\mu)\psi_{z}+\mu=\\
 & (\psi_{z}-1)\Biggl(\sum_{l=2}^{\infty}\beta_{l}(\psi_{z}^{l-1}+\psi_{z}^{l-2}+\cdots+\psi_{z})-\mu\Biggr),\\
 & \psi_{z}(0)=z.
\end{split}
\]

Please refer to \cite{Sevast} for more details of discussion for
$\psi_{z}(t)$.

\section{Main result}

The central goal of this paper is to prove the convergence of the
particle field $N(t,y)$, $y\in\mathbb{Z}^{d}$ to a steady state
(statistical equilibrium).
\begin{thm}
\label{main result} Let $N(t,y)$, $y\in\mathbb{Z}^{d}$ be a random
field as described above, assume that  for all $l\ge2$, 

\[
\beta_{l}\le\beta\delta^{l}\,\,for\,\,some\,\,\beta>0,\delta\in(0,1).
\]
Then, for all $y\in\mathbb{Z}^{d}$ 
\begin{equation}
N(t,y)\xrightarrow{\text{Law}}N(\infty,y).
\end{equation}
\noindent\textbf{Remark:}
\end{thm}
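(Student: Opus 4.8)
The plan is to exploit the Poissonian immigration structure together with the branching representation \eqref{representation of N(t,y)} to obtain an explicit formula for the probability generating function of $N(t,y)$, and then to pass to the limit $t\to\infty$ directly at the level of generating functions, reducing everything to a first-moment estimate. The subcriticality \eqref{Delta} will be the only real ingredient driving convergence; the geometric-decay hypothesis $\beta_l\le\beta\delta^l$ will enter only if one prefers the moment route hinted at in the introduction.

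First I would observe that the immigration points $\{(x,\tau_i(x))\}$ form a Poisson point field on $\mathbb{Z}^d\times[0,\infty)$ of intensity $\gamma$, and that the subpopulations $n(t-\tau_i(x),x,y)$ attached to distinct points are independent, so $N(t,y)$ is a Poisson functional of these points. By Campbell's formula for such functionals, for $z\in[0,1]$,
\begin{equation}
\mathbb{E}\,z^{N(t,y)}=\exp\!\left(\gamma\sum_{x\in\mathbb{Z}^d}\int_0^t\bigl(u_z(s,x;y)-1\bigr)\,ds\right),
\label{pgf-N}
\end{equation}
where I have substituted $s=t-\tau$ and $u_z$ is the subpopulation generating function from \eqref{eq:defintion}. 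Thus convergence in law of $N(t,y)$ reduces to convergence of the exponent as $t\to\infty$.

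Second, I would bound the integrand. For $z\in[0,1]$ and integer $n\ge0$ one has $0\le 1-z^{n}\le n(1-z)$, so
\[
\bigl|u_z(s,x;y)-1\bigr|=\mathbb{E}\bigl[1-z^{n(s,x,y)}\bigr]\le(1-z)\,m_1(s,x;y)=(1-z)\,e^{-\Delta s}\,p(s,x,y),
\]
using the explicit first moment \eqref{m_1}. Summing over $x$ and invoking the conservation law \eqref{p(t,x,y)-2-1}, $\sum_x p(s,x,y)=1$, I obtain
\[
\gamma\sum_{x}\int_0^t\bigl|u_z(s,x;y)-1\bigr|\,ds\le\gamma(1-z)\int_0^t e^{-\Delta s}\,ds\le\frac{\gamma(1-z)}{\Delta},
\]
where the subcriticality $\Delta>0$ is exactly what makes the time integral finite. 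Hence the exponent in \eqref{pgf-N} converges absolutely as $t\to\infty$ to a finite limit $g_\infty(z):=\exp\bigl(\gamma\sum_{x}\int_0^\infty(u_z(s,x;y)-1)\,ds\bigr)$. Since the same uniform bound forces $g_\infty(z)\to1$ as $z\to1^-$, the limit $g_\infty$ is the generating function of a proper $\mathbb{Z}_{\ge0}$-valued variable $N(\infty,y)$, and the continuity theorem for generating functions yields $N(t,y)\xrightarrow{\text{Law}}N(\infty,y)$.

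The main obstacle is the rigorous justification of the identity \eqref{pgf-N} and, more substantively, the control needed if one follows instead the factorial-cumulant route signalled by the Carleman hypothesis. Differentiating \eqref{pgf-N} at $z=1$ shows that the $k$-th factorial cumulant of $N(t,y)$ equals $\gamma\sum_x\int_0^t m_k(s,x;y)\,ds$, so there one must bound the higher factorial moments $m_k$ uniformly in a summable, time-integrable way. This is where $\beta_l\le\beta\delta^l$ enters: the moment hierarchy obtained by differentiating \eqref{backward Kolmogorov  equations} couples $m_k$ to products of lower moments with combinatorial weights built from the $\beta_l$, and geometric decay both prevents these weights from growing too fast—so that $\int_0^\infty m_k\,ds<\infty$, again via the factor $e^{-\Delta s}$ inherited from \eqref{m_1}—and guarantees that the limiting moment sequence obeys Carleman's condition, making the limit law determinate. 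In the generating-function approach this determinacy is automatic, so I regard the estimate of the Poisson exponent, rather than the moment problem, as the crux.
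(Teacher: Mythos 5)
Your proof is correct, and it takes a genuinely different and more economical route than the paper. The paper never passes to the limit at the level of the generating function of the full field; instead it runs an induction on \emph{all} factorial moments of the subpopulations (Lemma~\ref{upper_bound_for_factor_moments}), obtaining $m_k(t,x;0)\le c^k k!\,e^{-\Delta t}p(t,x,0)$ under the geometric-decay hypothesis on $\beta_l$, converts these via the Poisson immigration structure into uniform-in-$t$ bounds $\chi_l(N(t,0))\le c^l l!\,\gamma/\Delta$ on the cumulants of the total population, and then uses monotonicity of the cumulants in $t$ together with a Carleman-type (analyticity) argument to show the limiting cumulant sequence determines a unique law. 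Your per-site exponential formula is exactly what the paper derives in its lemma on $\chi_l\bigl(\sum_{\tau_i(x)\le t}n(t-\tau_i(x),x,0)\bigr)$ by conditioning on the Poisson count and using the order-statistics property, so the starting point is shared; the divergence is that you bound $1-u_z$ directly by the first moment \eqref{m_1} and invoke the continuity theorem for probability generating functions, which makes determinacy of the limit automatic and renders the hypothesis $\beta_l\le\beta\delta^l$ unnecessary for the convergence statement itself --- only $\Delta>0$ from \eqref{Delta} is used (and $\Delta>0$ already forces $\sum_l l\beta_l<\infty$, hence non-explosion of the Galton--Watson skeleton, so that $u_z$ is a genuine generating function of an a.s.\ finite variable; you should say this explicitly, and also note that the absolute convergence of $\sum_x\int_0^t(1-u_z)\,ds$, which your bound supplies, justifies the infinite product over $x$). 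What the paper's heavier route buys is quantitative information about the limit: all cumulants of $N(\infty,y)$ satisfy $\chi_l\le C^l l!$, i.e.\ the stationary law has exponential moments, which is what the authors want for the intermittency questions raised in the introduction; your argument establishes convergence but not these tail bounds.
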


\begin{itemize}
\item The condition that $\beta_{l}$ decreases geometrically implies that
the generating function of this sequence $\sum_{l=2}^{\infty}\beta_{l}z^{l}$
is analytic in the disk $|z|<\delta$ for some suitable $\delta>0$.
\item In order to prove Theorem \eqref{main result}, we first will estimate
all factorial moments of a subpopulation, i.e. $m_{k}(t,x;y)$, $k\ge1$,
see Eq.~\eqref{def k factorial moment}. From this and the relationship
between moments and cumulants, we can estimate cumulants for the total
population $N(t,y)$ uniformly in $t$. Using the monotonicity in
$t$ and boundedness of these cumulants, we can conclude that their
limit exists at $t\to\infty$. Then we will use the Carleman conditions
to establish a unique limiting distribution.
\end{itemize}
Under the model assumption, It is trivial that $m_{0}(t,x;y)\equiv u_{z}(t,x;y)\mid_{z=1}=1$.

For all $k\ge2$, differentiate Eq.~\eqref{backward Kolmogorov  equations}
$k$-times differentiation, we can derive equation for the $k$-th
factorial moments: 
\begin{equation}
\begin{split} & \frac{\partial m_{k}}{\partial t}=\left(\kappa\mathcal{L}_{a}+\sum_{l=2}^{\infty}(l-1)\beta_{l}\mathcal{L}_{b}\right)m_{k}-\Delta m_{k}+\\
 & \sum_{l=2}^{\infty}\beta_{l}\sum_{n=1}^{k-1}\frac{m_{n}}{n!}\sum_{{{\sum\nolimits _{s=1}^{l-1}j_{s}=k-n,}\atop {j_{s}\ge0}}}\frac{k!}{j_{1}!\cdots j_{l}!}\left(m_{j_{1}}*b\right)\cdot\ldots\cdot\left(m_{j_{l-1}}*b\right)+\\
 & \sum_{l=2}^{\infty}\beta_{l}\sum_{{{\sum\nolimits _{s=1}^{l-1}j_{s}=k,}\atop {0\le j_{s}\le k-1}}}\frac{k!}{j_{1}!\cdots j_{l}!}\left(m_{j_{1}}*b\right)\cdot\ldots\cdot\left(m_{j_{l-1}}*b\right)
\end{split}
\label{eq m_k}
\end{equation}
with the initial condition $m_{k}(0,x;y)=0$ when $k\ge2$. Without
loss of generality, we assume that $y=0$. 

Let us first recall Duhamel\textquoteright s principal. 
\begin{lem}
(Duhamel\textquoteright s principal) if $f(t,x)$, $t\ge0$, $x\in\mathbb{Z}^{d}$
is the fundamental solution of the homogeneous equation: 
\begin{equation}
\frac{\partial f}{\partial t}(t,x)=\mathcal{L}f(t,x)
\end{equation}
with the initial condition $f(0,x)=\delta(x)$, then the equation:
\begin{equation}
\frac{\partial F}{\partial t}(t,x)=\mathcal{L}F(t,x)+h(t,x)
\end{equation}
with the initial condition $F(0,x)=0$ has the solution: 
\begin{equation}
F(t,x)=\int_{0}^{t}ds\sum_{z\in\mathbb{Z}^{d}}f(t-s,x-z)h(s,z).
\end{equation}
\end{lem}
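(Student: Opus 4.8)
The plan is to verify directly that the proposed kernel representation solves the inhomogeneous equation with the stated initial data, and then to invoke linearity for uniqueness. Write the integrand as a spatial convolution, $G(t,s,x):=\sum_{z\in\mathbb{Z}^{d}}f(t-s,x-z)h(s,z)=\bigl(f(t-s,\cdot)*h(s,\cdot)\bigr)(x)$, so that $F(t,x)=\int_{0}^{t}G(t,s,x)\,ds$. The initial condition $F(0,x)=0$ is immediate, since integration over the degenerate interval $[0,0]$ vanishes.

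Next I would differentiate $F$ in $t$. Because both the upper limit of integration and the integrand depend on $t$, Leibniz's rule gives
\begin{equation*}
\frac{\partial F}{\partial t}(t,x)=G(t,t,x)+\int_{0}^{t}\frac{\partial}{\partial t}G(t,s,x)\,ds.
\end{equation*}
The boundary term is evaluated using the initial condition $f(0,\cdot)=\delta$:
\begin{equation*}
G(t,t,x)=\sum_{z\in\mathbb{Z}^{d}}f(0,x-z)h(t,z)=\sum_{z\in\mathbb{Z}^{d}}\delta(x-z)h(t,z)=h(t,x),
\end{equation*}
which supplies exactly the source term. For the remaining integral, the homogeneous equation $\partial_{t}f=\mathcal{L}f$ converts the time derivative of the kernel into a spatial action, $\frac{\partial}{\partial t}f(t-s,x-z)=(\mathcal{L}f)(t-s,x-z)$.

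The key step is then to pull $\mathcal{L}$ out through the sum over $z$ and the integral over $s$. Since $\mathcal{L}$ (here $\kappa\mathcal{L}_{a}+\sum_{l\ge2}(l-1)\beta_{l}\mathcal{L}_{b}$) is translation invariant, it commutes with the convolution in $z$, so that $\mathcal{L}_{x}f(t-s,x-z)=(\mathcal{L}f)(t-s,x-z)$; and being a bounded operator on $\ell^{\infty}(\mathbb{Z}^{d})$ — a convolution against a probability measure minus the identity, of operator norm at most $2$ — it may also be carried through the $s$-integral. This yields $\int_{0}^{t}(\mathcal{L}f)(t-s,\cdot)*h(s,\cdot)\,ds=\mathcal{L}F(t,x)$, and hence $\partial_{t}F=h+\mathcal{L}F$, as claimed.

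The main obstacle is purely the rigorous justification of these interchanges: differentiation under the integral sign with a moving endpoint, and commuting the (a priori infinite) lattice sum defining $\mathcal{L}$ with the time integration. Both reduce to dominated convergence once one records that $\mathcal{L}$ is bounded on $\ell^{\infty}$ and that $f$ and $h$ are uniformly bounded with $f$ continuously differentiable in $t$; I would state these integrability hypotheses explicitly at the outset. Uniqueness is then a one-line consequence of linearity: the difference of two solutions solves the homogeneous problem with zero initial data, which forces it to vanish identically, either by the semigroup property of $e^{t\mathcal{L}}$ or by a Gronwall estimate using the boundedness of $\mathcal{L}$.
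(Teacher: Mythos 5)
Your proof is correct, and it is worth noting that the paper itself offers no proof of this lemma at all: Duhamel's principle is stated there as a classical fact and immediately applied to Eq.~\eqref{eq m_k}, so your direct verification fills in an omission rather than diverging from an existing argument. Your route is the standard one and it works: the moving-endpoint Leibniz differentiation, the boundary term $G(t,t,x)=h(t,x)$ via $f(0,\cdot)=\delta$, and the commutation of $\mathcal{L}$ with the spatial convolution by translation invariance are exactly the right ingredients, and your observation that everything reduces to boundedness of $\mathcal{L}$ on $\ell^{\infty}(\mathbb{Z}^{d})$ is the correct justification for the interchanges, which the paper silently assumes. Two small points of precision: the operator actually used in the paper is $\kappa\mathcal{L}_{a}+\sum_{l\ge2}(l-1)\beta_{l}\mathcal{L}_{b}$, whose $\ell^{\infty}$ norm is at most $2\bigl(\kappa+\sum_{l\ge2}(l-1)\beta_{l}\bigr)$ rather than $2$ --- your bound of $2$ applies to each building block $\mathcal{L}_{a}$, $\mathcal{L}_{b}$ separately, and finiteness of the sum is guaranteed by the subcriticality condition \eqref{Delta}, i.e.\ $\sum_{l\ge2}(l-1)\beta_{l}<\mu<\infty$, so your boundedness hypothesis does hold here but deserves that one-line citation. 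Also, the lemma as stated only asserts that the formula \emph{is a} solution, so your uniqueness paragraph (via Gronwall or the semigroup $e^{t\mathcal{L}}$) is a bonus beyond the claim; it is nonetheless what makes the paper's later use of the formula as \emph{the} solution of Eq.~\eqref{eq m_k} legitimate, so keeping it strengthens rather than pads the argument.
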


In order to the estimate all factorial moments of a subpopulation,
the proof of the next lemma will be similar to the proofs in \cite{CE_SM}. 
\begin{lem}
\label{upper_bound_for_factor_moments} Under the conditions of Theorem~\ref{main result},
for all $k\ge1$ 
\begin{equation}
m_{k}(t,x;0)\le k!B^{k-1}D_{k}e^{-\Delta t}p(t,x,0),\label{formula_upper_bound_for_factor_moments}
\end{equation}
where 
\begin{equation}
B=\max\left\{ 1,\beta\int\limits _{0}^{\infty}e^{-\Delta s}p(s,0,0)ds\right\} <\infty\label{def_B}
\end{equation}
and the sequence $D_{k}$ is recursively defined as: $D_{1}=1$ and,
for $k\ge2$ 
\begin{equation}
\begin{split}D_{k}= & \sum_{l=2}^{\infty}\delta^{l}\sum_{n=1}^{k-1}D_{n}\sum_{i=1}^{l-1}{l-1 \choose i}\sum_{{{\sum\nolimits _{s=1}^{i}j_{s}=k-n,}\atop {j_{s}\ge1}}}D_{j_{1}}\cdot\ldots\cdot D_{j_{i}}+\\
 & \sum_{l=2}^{\infty}\delta^{l}\sum_{i=2}^{l-1}{l-1 \choose i}\sum_{{{\sum\nolimits _{s=1}^{i}j_{s}=k,}\atop {j_{s}\ge1}}}D_{j_{1}}\cdot\ldots\cdot D_{j_{i}}.
\end{split}
\label{D_k}
\end{equation}
\end{lem}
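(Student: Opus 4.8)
The plan is to prove \eqref{formula_upper_bound_for_factor_moments} by induction on $k$, with Duhamel's principle and the Chapman--Kolmogorov identity for $p$ doing the work. The base case $k=1$ is immediate: by \eqref{m_1} we have $m_1(t,x;0)=e^{-\Delta t}p(t,x,0)$, which is exactly the claimed bound since $D_1=1$ and $B^{0}=1!=1$ (here with equality). For the inductive step I fix $k\ge2$ and assume \eqref{formula_upper_bound_for_factor_moments} for all indices smaller than $k$. I read \eqref{eq m_k} as an inhomogeneous equation $\partial_t m_k=(\mathcal{L}-\Delta)m_k+h_k$ with $\mathcal{L}=\kappa\mathcal{L}_a+\sum_{l\ge2}(l-1)\beta_l\mathcal{L}_b$ and $m_k(0,\cdot)=0$, where $h_k$ collects the two finite-product sums on the right-hand side. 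The fundamental solution of $\partial_t-(\mathcal{L}-\Delta)$ is $e^{-\Delta t}p(t,x,0)$ by \eqref{p(t,x,y)-1}, so Duhamel's principle yields
\[
m_k(t,x;0)=\int_0^t ds\sum_{z}e^{-\Delta(t-s)}p(t-s,x,z)\,h_k(s,z).
\]

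Next I substitute the inductive bounds into $h_k$. Each bare factor $m_n(s,z;0)$ contributes $n!\,B^{n-1}D_n e^{-\Delta s}p(s,z,0)$, and each convolution factor obeys $(m_j*b)(s,z)\le j!\,B^{j-1}D_j e^{-\Delta s}(p*b)(s,z)$. In every product term I keep the position dependence of a single factor and bound all the others pointwise by the maximum $p(s,0,0)$, using \eqref{p(t,x,y)-2} together with $(p*b)(s,z)\le p(s,0,0)$. The factorials $n!\prod_s j_s!$ cancel against the multinomial denominators, leaving a clean $k!$; the powers of $B$ collect to $B^{k-1-i}$ in the first sum ($i$ nonzero offsets) and to $B^{k-i}$ in the second, the $D$'s collect to $D_n\prod_s D_{j_s}$, and the count of which of the $l-1$ slots are nonzero produces the binomial $\binom{l-1}{i}$. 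Applying Chapman--Kolmogorov, $\sum_z p(t-s,x,z)p(s,z,0)=p(t,x,0)$, the retained bare factor restores the kernel $p(t,x,0)$, while the surviving exponentials combine as $e^{-\Delta(t-s)}e^{-\Delta s(i+1)}=e^{-\Delta t}e^{-\Delta s\,i}$ in the first sum (and with exponent $i-1$ in the second). Since $0\le e^{-\Delta s}p(s,0,0)\le1$, one has $(e^{-\Delta s}p(s,0,0))^{i}\le e^{-\Delta s}p(s,0,0)$ for $i\ge1$, so the time integral is bounded by $\int_0^\infty e^{-\Delta s}p(s,0,0)\,ds\le B/\beta$ via \eqref{def_B}; this supplies one extra factor of $B$, cancels the $\beta$ coming from $\beta_l\le\beta\delta^{l}$, and, because $B\ge1$, raises the exponent back to at most $B^{k-1}$ (this is precisely why $i\ge2$ is forced in the second sum, where the surviving integrand is a $(i-1)$-th power). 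Matching the resulting sum over $l,n,i$ and the offsets $j_s$ against \eqref{D_k} term by term then closes the induction.

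The one genuine subtlety, and the step I expect to be the main obstacle, is the appearance of convolution kernels. In the first sum a bare factor $m_n$ is always present to carry the position, so Chapman--Kolmogorov returns exactly $p(t,x,0)$; but in the second sum every surviving factor has the form $m_j*b$, and the analogous computation $\sum_z p(t-s,x,z)(p*b)(s,z)=(p*b)(t,x)$ returns $(p*b)(t,x)$ rather than $p(t,x,0)$, with $(p*b)(t,x)\not\le p(t,x,0)$ pointwise in general. The resolution is that the whole argument uses only two features of the kernel: the pointwise bound $(p*b)(t,x)\le p(t,0,0)$ (legitimizing the estimate of the non-retained factors and the later summation) and the unit total mass $\sum_x (p*b)(t,x)=\sum_x p(t,x,0)=1$; both are preserved under convolution with $b$ and under the Chapman--Kolmogorov composition used in the Duhamel step. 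Hence the induction is self-reproducing once $p(t,x,0)$ in \eqref{formula_upper_bound_for_factor_moments} is read as a representative of the family $\{p*b^{*r}\}_{r\ge0}$, every member of which shares these two properties, while the constant $D_k$ and the power $B^{k-1}$ are unaffected. (In the binary setting of \cite{CE_SM} the second sum is empty, since $\sum_{s=1}^{l-1}j_s=k$ with a single slot forces $j_1=k$, which is excluded, so this issue does not arise there; it is genuinely new for $l\ge3$.)
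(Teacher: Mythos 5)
Your proposal follows the same route as the paper's own proof: induction on $k$, Duhamel's formula with the kernel $e^{-\Delta(t-s)}p(t-s,\cdot,\cdot)$, reorganization of the inner sums by the number $i$ of nonzero indices (producing the binomials $\binom{l-1}{i}$ and forcing $i\ge2$ in the second sum), retention of one position-carrying factor with the others bounded via $p(s,0,0)e^{-\Delta s}\le1$, Chapman--Kolmogorov to restore $p(t,x,0)$, and absorption of $\beta\int_0^\infty e^{-\Delta s}p(s,0,0)\,ds$ into one power of $B$; your accounting of the powers of $B$ and the factorials matches the paper's exactly. The one substantive divergence is the treatment of the convolved factors $m_j*b$. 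The paper disposes of them with the pointwise inequality $(p*b)(t,x,0)\le p(t,x,0)$, justified by a Fourier computation using $\hat b\le 1$; but that computation only shows $p-p*b$ has nonnegative Fourier transform, i.e.\ is positive definite and hence maximal at the origin, not that it is nonnegative everywhere, and indeed for nearest-neighbour $b$ one has $(p*b)(t,x,0)>p(t,x,0)$ in the tails $|x|\gg t$ by convexity. You correctly flag this and instead carry the family $\{p*b^{*r}\}_{r\ge0}$ through the induction, using only the two properties that survive both convolution with $b$ and the Duhamel composition: $\sup_x(p*b^{*r})(t,x)\le p(t,0,0)$ and unit total mass. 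That is the more robust argument, and it costs nothing downstream, since Corollary~\ref{m_k(t,x;0)} and the cumulant estimates only use $\sup_x$ and $\sum_x$ of $m_k$.

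The one thing still missing is that ``read $p(t,x,0)$ as a representative of the family'' is not yet a proof: different terms of the sum retain different kernels $p*b^{*r}$, so the conclusion in the form \eqref{formula_upper_bound_for_factor_moments} would have to be restated with a convex mixture of such kernels, or --- more simply --- replaced by the two-component induction hypothesis $\sup_x m_k(t,x;0)\le k!\,B^{k-1}D_k e^{-\Delta t}p(t,0,0)$ together with $\sum_x m_k(t,x;0)\le k!\,B^{k-1}D_k e^{-\Delta t}$, both of which close under your estimates and suffice for all later applications. Write that version out explicitly and your argument is complete; as it stands it is a sketch, but one that identifies and repairs a genuine flaw in the paper's own proof.
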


\textit{Proof:} Denote $\tilde{m}_{k}(t,x;0)=\frac{m_{k}(t,x;0)}{k!}$,
$M_{j}=\tilde{m}_{k}*b=\sum_{v\in\mathbb{Z}^{d}}b(v)\tilde{m}_{j}(t,x+v;0)$,
and $\mathcal{L}_{a,b}=\kappa\mathcal{L}_{a}+\sum_{l=2}^{\infty}(l-1)\beta_{l}\mathcal{L}_{b}$.
Then, Eq.~\eqref{eq m_k} has the form 
\begin{equation}
\begin{split}\frac{\partial\tilde{m}_{k}}{\partial t}=\mathcal{L}_{a,b}\tilde{m}_{k}+ & \sum_{l=2}^{\infty}\beta_{l}\sum_{n=1}^{k-1}\tilde{m}_{n}\sum_{{{\sum\nolimits _{s=1}^{l-1}j_{s}=k-n,}\atop {j_{s}\ge0}}}M_{j_{1}}\cdot\ldots\cdot M_{j_{l-1}}+\\
 & \sum_{l=2}^{\infty}\beta_{l}\sum_{{{\sum\nolimits _{s=1}^{l-1}j_{s}=k,}\atop {0\le j_{s}\le k-1}}}M_{j_{1}}\cdot\ldots\cdot M_{j_{l-1}}.
\end{split}
\end{equation}
From Duhamel's formula, we obtain that 
\begin{equation}
\begin{split} & \tilde{m}_{k}(t,x;0)=\\
 & \int_{0}^{t}dse^{-\Delta(t-s)}\sum_{z\in\mathbb{Z}^{d}}p(t-s,x-z,0)\sum_{l=2}^{\infty}\beta_{l}\sum_{n=1}^{k-1}\tilde{m}_{n}\sum_{{{\sum\nolimits _{s=1}^{l-1}j_{s}=k-n,}\atop {j_{s}\ge0}}}M_{j_{1}}\cdot\ldots\cdot M_{j_{l-1}}(s,z;0)+\\
 & \int_{0}^{t}dse^{-\Delta(t-s)}\sum_{z\in\mathbb{Z}^{d}}p(t-s,x-z,0)\sum_{l=2}^{\infty}\beta_{l}\sum_{{{\sum\nolimits _{s=1}^{l-1}j_{s}=k,}\atop {0\le j_{s}\le k-1}}}M_{j_{1}}\cdot\ldots\cdot M_{j_{l-1}}(s,z;0).
\end{split}
\label{proof 1}
\end{equation}
 If we excluding $M_{0}\equiv1$, then the inner sum of the first
term in Eq.~\eqref{proof 1} can be formatted as: 
\begin{equation}
\begin{split} & \sum_{n=1}^{k-1}\tilde{m}_{n}\sum_{{{\sum\nolimits _{s=1}^{l-1}j_{s}=k-n,}\atop {j_{s}\ge0}}}M_{j_{1}}\cdot\ldots\cdot M_{j_{l-1}}=\\
 & \sum_{n=1}^{k-1}\tilde{m}_{n}\sum_{i=1}^{l-1}{l-1 \choose i}\sum_{{{\sum\nolimits _{s=1}^{i}j_{s}=k-n,}\atop {j_{s}\ge1}}}M_{j_{1}}\cdot\ldots\cdot M_{j_{i}}.
\end{split}
\label{eq:proof_add}
\end{equation}
and the inner sum of the second term in Eq.~\eqref{proof 1} can
be written as: 
\begin{equation}
\sum_{{{\sum\nolimits _{s=1}^{l-1}j_{s}=k,}\atop {0\le j_{s}\le k-1}}}M_{j_{1}}\cdot\ldots\cdot M_{j_{l-1}}=\sum_{i=2}^{l-1}{l-1 \choose i}\sum_{{{\sum\nolimits _{s=1}^{i}j_{s}=k,}\atop {j_{s}\ge1}}}M_{j_{1}}\cdot\ldots\cdot M_{j_{i}}.\label{proof 2a}
\end{equation}
In the following, we will prove the lemma using mathematical induction
.

For $k=1$, 
\[
\tilde{m}_{1}(t,x;0)=p(t,x,0).
\]

and\foreignlanguage{english}{ $p(t,x,0)$ is} the fundamental solution
of Eq.~\eqref{m_1} and the base of induction is verified.

Let's assume that Eq.~\eqref{formula_upper_bound_for_factor_moments}
is true for $k-1$. Then, the right-hand side of Eq.~\eqref{eq:proof_add}
is bounded by
\begin{equation}
\begin{split} & B^{k-1}e^{-\Delta s}p(s,z;0)\sum_{n=1}^{k-1}D_{n}\sum_{i=1}^{l-1}{l-1 \choose i}\left(\frac{e^{-\Delta s}(p*b)}{B}\right)^{i}\sum_{{{\sum\nolimits _{s=1}^{i}j_{s}=k-n,}\atop {j_{s}\ge1}}}D_{j_{1}}\cdot\ldots\cdot D_{j_{i}}\le\\
 & B^{k-1}e^{-\Delta s}p(s,z;0)\frac{p(s,0,0)e^{-\Delta s}}{B}\sum_{n=1}^{k-1}D_{n}\sum_{i=1}^{l-1}{l-1 \choose i}\sum_{{{\sum\nolimits _{s=1}^{i}j_{s}=k-n,}\atop {j_{s}\ge1}}}D_{j_{1}}\cdot\ldots\cdot D_{j_{i}},
\end{split}
\label{proof 3}
\end{equation}
where we use simple facts that for all $x,y\in\mathbb{Z}^{d}$ $p(t,x,y)\le p(t,0,0)$
from Eq.\ref{p(t,x,y)-2} and $(p*b)(t,x,0)\le p(t,x,0)$. 

Indeed, 

\begin{eqnarray*}
(p*b)(t,x,0) & = & \sum_{z}b(z)p(t,x-z,0)\\
 & = & \frac{1}{(2\pi)^{d}}\int_{T^{d}}\hat{p}(t,k,0)\hat{b}(k)e^{-ikx}\,dk\\
 & \leq & \frac{1}{(2\pi)^{d}}\int_{T^{d}}\hat{p}(t,k,0)e^{-ikx}\,dk\\
 & = & p(t,x,0).
\end{eqnarray*}
Here we use the fact that $\hat{p}(t,k,0)$ and $\hat{b}(k)$ are
real and not larger than $1$.

From the definition of $B$, see Eq.~\eqref{def_B}, for all $i\ge1$
we have 
\[
\left(\frac{(p*b)(s,z,0)e^{-\Delta s}}{B}\right)^{i}\le\frac{e^{-\Delta s}p(s,z,0)}{B}\le\frac{e^{-\Delta s}p(s,0,0)}{B}.
\]

Analogously, the right-hand side of Eq.~\eqref{proof 2a} is bounded
by
\begin{equation}
\begin{split} & B^{k-1}p(s,z,0)e^{-\Delta s}\sum_{i=2}^{l-1}{l-1 \choose i}\left(\frac{e^{-\Delta s}p*b}{B}\right)^{i-1}\sum_{{{\sum\nolimits _{s=1}^{i}j_{s}=k,}\atop {j_{s}\ge1}}}D_{j_{1}}\cdot\ldots\cdot D_{j_{i}}\le\\
 & B^{k-1}p(s,z,0)e^{-\Delta s}\frac{e^{-\Delta s}p(s,0,0)}{B}\sum_{i=2}^{l-1}{l-1 \choose i}\sum_{{{\sum\nolimits _{s=1}^{i}j_{s}=k,}\atop {j_{s}\ge1}}}D_{j_{1}}\cdot\ldots\cdot D_{j_{i}}.
\end{split}
\label{proof 3a}
\end{equation}

Now we can substitute it into Eq.~\eqref{proof 1}: 
\begin{equation}
\begin{split}\tilde{m}_{k}(t,x;0)\le & B^{k-1}e^{-\Delta t}\int_{0}^{t}ds\frac{e^{-\Delta s}p(s,0,0)}{B}\sum_{z\in\mathbb{Z}^{d}}p(t-s,x-z,0)p(s,z,0)\cdot\\
 & \sum_{l=2}^{\infty}\beta_{l}\biggl(\sum_{n=1}^{k-1}D_{n}\sum_{i=1}^{l-1}{l-1 \choose i}\sum_{{{\sum\nolimits _{s=1}^{i}j_{s}=k-n,}\atop {j_{s}\ge1}}}D_{j_{1}}\cdot\ldots\cdot D_{j_{i}}+\\
 & \sum_{i=2}^{l-1}{l-1 \choose i}\sum_{{{\sum\nolimits _{s=1}^{i}j_{s}=k,}\atop {j_{s}\ge1}}}D_{j_{1}}\cdot\ldots\cdot D_{j_{i}}\biggr)
\end{split}
\end{equation}
Base on the following facts: 
\begin{itemize}
\item $\sum_{z\in\mathbb{Z}^{d}}p(t-s,x-z,0)p(s,z,0)=\sum_{z\in\mathbb{Z}^{d}}p(t-s,x,z)p(s,z,0)=p(t,x,0)$
from Chapman-Kolmogorov equation;
\item $\beta_{l}\le\beta\delta^{l}$ from assumption of the lemma;
\item $\frac{\beta\int_{0}^{t}e^{-\Delta s}p(s,0,0)ds}{B}\le1$ from Eq.~\eqref{def_B},
\end{itemize}
We can state the lemma using the recursive definition of the sequence
$D_{k}$ Eq.~\eqref{D_k} $\Box$

\begin{lem}
\label{geometrically growth D_k} The sequence $D_{k}$ that is determined
by $D_{1}=1$ and Eq.~\eqref{D_k} increases no faster than geometrically.

\end{lem}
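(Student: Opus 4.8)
The plan is to encode the recursion for $D_k$ in a single generating function and to prove that this function is analytic at the origin; since the $D_k$ are nonnegative, a positive radius of convergence is exactly the assertion that $D_k$ grows no faster than geometrically. Set $\mathcal{D}(z)=\sum_{k\ge1}D_kz^k$, regarded first as a formal power series with $\mathcal{D}(0)=0$. The key observation is that the innermost sums in \eqref{D_k} are coefficients of powers of $\mathcal{D}$: because every $j_s\ge1$, one has $\sum_{j_1+\cdots+j_i=m,\ j_s\ge1}D_{j_1}\cdots D_{j_i}=[z^m]\mathcal{D}(z)^i$. Summing over $i$ by the binomial theorem, $\sum_{i=1}^{l-1}\binom{l-1}{i}\mathcal{D}^i=(1+\mathcal{D})^{l-1}-1$ and $\sum_{i=2}^{l-1}\binom{l-1}{i}\mathcal{D}^i=(1+\mathcal{D})^{l-1}-1-(l-1)\mathcal{D}$. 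It is convenient to introduce
\begin{equation}
G(w)=\sum_{l=2}^{\infty}\delta^{l}\bigl[(1+w)^{l-1}-1\bigr],\qquad H(w)=\sum_{l=2}^{\infty}\delta^{l}\bigl[(1+w)^{l-1}-1-(l-1)w\bigr].
\end{equation}
Note first that each $D_k$ is well defined and finite: for a fixed total degree $k$ only indices $i\le k$ can contribute, and $\sum_{l}\delta^{l}\binom{l-1}{i}<\infty$ for each fixed $i$ precisely because of the geometric factor $\delta^l$ coming from the hypothesis $\beta_l\le\beta\delta^l$. This is the place where the geometric decay is indispensable.

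With these notations, the two lines of \eqref{D_k} say that the first block of terms equals $[z^k]\bigl(\mathcal{D}(z)\,G(\mathcal{D}(z))\bigr)$ (the factor $\mathcal{D}$ carrying the sum over $n$, a convolution), while the second block equals $[z^k]H(\mathcal{D}(z))$; together with $D_1=1$ this is equivalent to the functional equation
\begin{equation}
\mathcal{D}(z)=z+\mathcal{D}(z)\,G(\mathcal{D}(z))+H(\mathcal{D}(z)),\qquad\text{equivalently}\qquad \Phi(\mathcal{D}(z))=z,
\end{equation}
where $\Phi(w)=w-wG(w)-H(w)$. Since $\delta\in(0,1)$, summing the geometric series in $l$ gives the closed forms $G(w)=\dfrac{\delta^{2}(1+w)}{1-\delta(1+w)}-\dfrac{\delta^{2}}{1-\delta}$ and a similar rational expression for $H$; in particular $G$ and $H$, hence $\Phi$, are rational and analytic in the disk $|1+w|<\delta^{-1}$, which contains a genuine neighbourhood of $w=0$ (the only singularity sits at $w=(1-\delta)/\delta>0$). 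A direct computation gives $G(0)=H(0)=0$ and $H'(0)=0$, so that $\Phi(0)=0$ and $\Phi'(0)=1-G(0)-0\cdot G'(0)-H'(0)=1\neq0$.

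Now apply the analytic inverse function theorem to $\Phi$ at $w=0$: since $\Phi$ is analytic near $0$ with $\Phi(0)=0$ and $\Phi'(0)\neq0$, it admits an analytic local inverse fixing the origin, whose Taylor series is the unique formal power series inverse of $\Phi$. By uniqueness this inverse coincides with $\mathcal{D}$, so $\mathcal{D}$ converges on some disk $|z|<\rho$ with $\rho>0$. Therefore $\limsup_k D_k^{1/k}=1/\rho<\infty$, i.e.\ for every $R>1/\rho$ there is a constant $C$ with $D_k\le CR^{k}$, which is the claimed at-most-geometric growth. The main obstacle is not the inverse function theorem itself but the bookkeeping that precedes it: one must verify carefully that the multi-index sums in \eqref{D_k} translate exactly into $\mathcal{D}\,G(\mathcal{D})+H(\mathcal{D})$, and justify the interchange of the summations over $l$, $i$, $n$ and the $j_s$ at the level of formal power series—legitimate here only because the factor $\delta^l$ makes the $l$-sum converge coefficientwise and makes $G,H$ analytic at the origin, which is also what secures $\Phi'(0)\neq0$.
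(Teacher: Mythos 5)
Your proof is correct and follows essentially the same route as the paper's own argument (the generating-function computation sketched in the commented-out block and attributed to Lemma~2 of \cite{CE_SM}): encode the recursion \eqref{D_k} as a functional equation for $\sum_{k}D_{k}w^{k}$ via the binomial identity and the geometric summation over $l$, then conclude a positive radius of convergence. Your final step---applying the analytic inverse function theorem to $\Phi(w)=w-wG(w)-H(w)$ with $\Phi(0)=0$, $\Phi'(0)=1$---is in fact a cleaner justification than the paper's assertion that the solution is a ``rational generating function'' (the functional equation really makes $D$ algebraic rather than rational), but the substance of the two arguments is identical.
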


The geometrically growth of $D_{k}$ states in Lemma \eqref{geometrically growth D_k}
is proved in Lemma 2 in \cite{CE_SM}. From Lemma \eqref{upper_bound_for_factor_moments}
and Lemma \eqref{geometrically growth D_k}, we have the following
Corollary. 
\begin{cor}
\label{m_k(t,x;0)} 
\begin{equation}
m_{k}(t,x;0)\le c^{k}k!e^{-\Delta t}p(t,x,0)
\end{equation}
for all $k\ge1$ and 
\begin{equation}
\sum_{x\in\mathbb{Z}^{d}}m_{k}(t,x;0)\le c^{k}k!e^{-\Delta t}.
\end{equation}
\end{cor}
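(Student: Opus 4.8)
The plan is to read this corollary off directly from Lemma~\ref{upper_bound_for_factor_moments} and Lemma~\ref{geometrically growth D_k}, with only minor bookkeeping of constants. Lemma~\ref{upper_bound_for_factor_moments} already supplies the bound $m_k(t,x;0)\le k!\,B^{k-1}D_k\,e^{-\Delta t}p(t,x,0)$ with $B<\infty$ as in~\eqref{def_B}, so the only remaining task is to convert the factor $B^{k-1}D_k$ into a clean geometric factor $c^k$.

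First I would invoke Lemma~\ref{geometrically growth D_k}: since $\{D_k\}$ grows no faster than geometrically, there is a finite constant $a\ge 1$ with $D_k\le a^k$ for every $k\ge 1$ (any leading multiplicative constant in the geometric bound can be absorbed by enlarging the base $a$, because $D_k^{1/k}$ stays bounded). Substituting this into the estimate of Lemma~\ref{upper_bound_for_factor_moments} gives $m_k(t,x;0)\le \tfrac{1}{B}(aB)^k\,k!\,e^{-\Delta t}p(t,x,0)$. Since $B\ge 1$ forces $1/B\le 1$, setting $c=aB$ yields the first inequality $m_k(t,x;0)\le c^k k!\,e^{-\Delta t}p(t,x,0)$ for all $k\ge 1$.

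For the second inequality I would simply sum the first one over $x\in\mathbb{Z}^{d}$ and pull the $x$-independent factor $c^k k!\,e^{-\Delta t}$ out of the sum, leaving $\sum_{x}p(t,x,0)$. By the symmetry $p(t,x,0)=p(t,0,x)$ of the transition kernel together with the normalization~\eqref{p(t,x,y)-2-1}, this sum equals $\sum_{x}p(t,0,x)=1$, which gives $\sum_{x}m_k(t,x;0)\le c^k k!\,e^{-\Delta t}$.

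There is no real obstacle here: the corollary is a direct consequence of the two preceding lemmas. The only points requiring a little care are (i) choosing the geometric base $a$ so that the single multiplicative constant hidden in ``grows no faster than geometrically'' is absorbed into $c=aB$, and (ii) checking that the lattice sum of $p(t,\cdot,0)$ is genuinely equal to $1$ rather than merely bounded, which is precisely where the symmetry of the transition kernel and~\eqref{p(t,x,y)-2-1} enter.
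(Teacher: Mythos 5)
Your proposal is correct and follows exactly the route the paper intends: the corollary is stated as an immediate consequence of Lemma~\ref{upper_bound_for_factor_moments} and Lemma~\ref{geometrically growth D_k}, with the constant $c$ absorbing $B$ and the geometric base for $D_k$, and the second inequality obtained by summing and using $\sum_{x}p(t,x,0)=1$. The paper gives no further detail, so your bookkeeping of the constants and the use of symmetry plus the normalization~\eqref{p(t,x,y)-2-1} is precisely the intended argument.
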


Let us now introduce the notation for cumulants. For any random variable
$X$, let $\phi_{X}(z)=Ez^{X}$, then the $l$-th~cumulant 
\begin{eqnarray*}
\chi_{l}(X)=\frac{d^{l}}{dz^{l}}\ln(\phi_{X}(z))\mid_{z=1}.
\end{eqnarray*}
In general, the relationship between moments and cumulants is given
by 
\begin{eqnarray}
\chi_{l}(X)=l!\sum\frac{(-1)^{j_{1}+\cdots+j_{l}-1}(j_{1}+\cdots+j_{l}-1)}{j_{1}!\cdot\ldots\cdot j_{l}!}\prod_{k=1}^{l}\left(\frac{m_{k}(X)}{k!}\right)^{j_{k}}\label{eq:cumulant and moments}
\end{eqnarray}
and 
\begin{eqnarray}
m_{l}(X)=l!\sum\frac{1}{j_{1}!\cdot\ldots\cdot j_{l}!}\prod_{k=1}^{l}\left(\frac{\chi_{k}(X)}{k!}\right)^{j_{k}}\label{eq:moments and cumulant}
\end{eqnarray}
where the sign $\sum$ means the sum over all non-negative integers
($j_{1},\cdots,j_{l})$ satisfying the constraint 
\begin{eqnarray*}
1j_{1}+2j_{2}+3j_{3}+\cdots+lj_{l}=l.
\end{eqnarray*}

One important property of cumulants is additivity: for independent
random variables $X$ and $Y$, $\chi_{l}(X+Y)=\chi_{l}(X)+\chi_{l}(Y)$.

Due to previous remark we obtain that 
\begin{equation}
\begin{split}\chi_{l}(N(t,0)) & =\chi_{l}\Biggl(\sum_{x\in\mathbb{Z}^{d}}\sum_{\tau_{i}(x)\le t}n(t-\tau_{i}(x),x,0)\Biggr)\\
 & =\sum_{x\in\mathbb{Z}^{d}}\chi_{l}\Biggl(\sum_{\tau_{i}(x)\le t}n(t-\tau_{i}(x),x,0)\Biggr).
\end{split}
\label{chi_l(N)}
\end{equation}
In order to calculate $\chi_{l}\Biggl(\sum_{\tau_{i}(x)\le t}n(t-\tau_{i}(x),x,0)\Biggr)$
, we will prove the following Lemma. 

\begin{lem}
Let $\xi$ be a random variable uniformly distributed on $[0,t]$,
then 
\[
\chi_{l}\Biggl(\sum_{\tau_{i}(x)\le t}n(t-\tau_{i}(x),x,0)\Biggr)=(\gamma t)m_{l}\Biggl(n(\xi,x,0)\Biggr).
\]
\begin{proof}
The generating function of $\chi_{l}\Biggl(\sum_{\tau_{i}(x)\le t}n(t-\tau_{i}(x),x,0)\Biggr)$
has the simple form: 
\[
\begin{split}F(z) & =Ez^{\sum_{\tau_{i}(x)\le t}n(t-\tau_{i}(x),x,0)}\\
 & =Ez^{\sum_{i=1}^{\Pi_{x}(t)}n(\xi,x,0)}\\
 & =\sum_{k=0}^{\infty}e^{-\gamma t}\frac{(\gamma t)^{l}}{l!}\Biggl(Ez^{n(\xi,x,0)}\Biggr)^{l}=exp\Biggl\{-\gamma t(1-Ez^{n(\xi,x,0)})\Biggr\},
\end{split}
\]
where $\Pi_{x}(t)$ is a Poissonian process with parameter $\gamma$
and we use the fact that, if $\Pi_{x}(t)=l$, then the moments of
this process has the distribution of the ordered statistics of $l$
uniformly distributed random variables on $[0,t]$ .

The log-generating function is 
\begin{equation}
\begin{split}\ln F(z) & =-\gamma t(1-Ez^{n(\xi,x,0)})\\
 & =-\gamma t\Biggl(1-\sum_{l=0}^{\infty}\frac{m_{l}(n(\xi,x,0))}{l!}(z-1)^{l}\Biggr)\\
 & =\gamma t\sum_{l=1}^{\infty}\frac{m_{l}(n(\xi,x,0))}{l!}(z-1)^{l}.
\end{split}
\label{11}
\end{equation}
At the same time, 
\begin{equation}
\ln F(z)=\sum_{l=1}^{\infty}\frac{\chi_{l}\Biggl(\sum_{\tau_{i}(x)\le t}n(t-\tau_{i}(x),x,0)\Biggr)}{l!}(z-1)^{l}.\label{12}
\end{equation}
From \eqref{11} and \eqref{12} we obtain the statement of the lemma. 
\end{proof}
\begin{cor}
\label{chi_l(N)} $\chi_{l}(N(t,0))$ is a monotone function of time
$t$ and 
\[
\chi_{l}(N(t,0))=\gamma\int_{0}^{t}\sum_{x\in\mathbb{Z}^{d}}m_{l}(s,x,0)\,ds.
\]
\end{cor}

\end{lem}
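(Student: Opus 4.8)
The plan is to combine the additivity decomposition already recorded in \eqref{chi_l(N)} with the per-site identity just established in the preceding Lemma. That Lemma gives, for each site, $\chi_{l}\bigl(\sum_{\tau_{i}(x)\le t}n(t-\tau_{i}(x),x,0)\bigr)=(\gamma t)\,m_{l}\bigl(n(\xi,x,0)\bigr)$, where $\xi$ is uniform on $[0,t]$, so it remains only to evaluate the factorial moment $m_{l}(n(\xi,x,0))$ of the time-randomized subpopulation and then sum over $x$.

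First I would unfold $m_{l}(n(\xi,x,0))$ by conditioning on the value of $\xi$. Writing the falling factorial $(m)_{(l)}=m(m-1)\cdots(m-l+1)$ and using the tower property,
\begin{equation*}
m_{l}\bigl(n(\xi,x,0)\bigr)=\mathbb{E}\bigl[(n(\xi,x,0))_{(l)}\bigr]=\int_{0}^{t}\frac{1}{t}\,\mathbb{E}\bigl[(n(s,x,0))_{(l)}\bigr]\,ds=\frac{1}{t}\int_{0}^{t}m_{l}(s,x,0)\,ds,
\end{equation*}
since conditionally on $\xi=s$ the variable $n(\xi,x,0)$ has the law of $n(s,x,0)$, whose $l$-th factorial moment is $m_{l}(s,x,0)$ by \eqref{def k factorial moment}. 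Substituting into the per-site identity, the factor $\gamma t$ and the $1/t$ cancel, giving $(\gamma t)\,m_{l}(n(\xi,x,0))=\gamma\int_{0}^{t}m_{l}(s,x,0)\,ds$. Summing over $x\in\mathbb{Z}^{d}$ and exchanging the sum with the integral then yields the claimed formula.

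The interchange of $\sum_{x}$ and $\int_{0}^{t}$ is the only point requiring care, and it is justified in two ways. Every integrand satisfies $m_{l}(s,x,0)\ge0$, because a falling factorial $(m)_{(l)}$ of a non-negative integer is either zero (when $m<l$, since one factor vanishes) or a product of positive integers (when $m\ge l$), hence non-negative, so its expectation is non-negative; thus Tonelli's theorem applies. Moreover Corollary~\ref{m_k(t,x;0)} supplies the uniform summable bound $\sum_{x}m_{l}(s,x,0)\le c^{l}l!\,e^{-\Delta s}$, which guarantees absolute convergence. This same non-negativity delivers monotonicity for free: since the integrand $\sum_{x}m_{l}(s,x,0)$ is non-negative, $\chi_{l}(N(t,0))=\gamma\int_{0}^{t}\sum_{x}m_{l}(s,x,0)\,ds$ is non-decreasing in $t$. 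The expected main obstacle is therefore not conceptual but purely a matter of bookkeeping, namely verifying the conditional-expectation step and the Tonelli hypotheses; as a bonus, the bound from Corollary~\ref{m_k(t,x;0)} shows the limit $\chi_{l}(N(\infty,0))=\gamma\int_{0}^{\infty}\sum_{x}m_{l}(s,x,0)\,ds\le\gamma c^{l}l!/\Delta$ is finite, which is precisely the input the subsequent Carleman argument will require.
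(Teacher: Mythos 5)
Your argument for the corollary is sound, but it rests entirely on the identity $\chi_{l}\bigl(\sum_{\tau_{i}(x)\le t}n(t-\tau_{i}(x),x,0)\bigr)=(\gamma t)\,m_{l}\bigl(n(\xi,x,0)\bigr)$, which you attribute to ``the preceding Lemma''. No preceding result in the paper supplies this: it is precisely the Lemma you are being asked to prove here (what is available before this point is Duhamel's principle, the factorial-moment bound of Lemma~\ref{upper_bound_for_factor_moments}, the growth estimate for $D_{k}$, Corollary~\ref{m_k(t,x;0)}, and the additivity decomposition $\chi_{l}(N(t,0))=\sum_{x}\chi_{l}\bigl(\sum_{\tau_{i}(x)\le t}n(t-\tau_{i}(x),x,0)\bigr)$). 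As written, the proposal is therefore circular on its central step. The missing content is the compound-Poisson computation: condition on the number $\Pi_{x}(t)$ of immigration events at $x$ up to time $t$, which is Poisson with mean $\gamma t$; given $\Pi_{x}(t)=k$, the arrival times are distributed as the order statistics of $k$ i.i.d.\ uniform variables on $[0,t]$ and the descendant families of distinct immigrants evolve independently, so
\[
F(z)=Ez^{\sum_{\tau_{i}(x)\le t}n(t-\tau_{i}(x),x,0)}=\sum_{k=0}^{\infty}e^{-\gamma t}\frac{(\gamma t)^{k}}{k!}\Bigl(Ez^{n(\xi,x,0)}\Bigr)^{k}=\exp\Bigl\{-\gamma t\bigl(1-Ez^{n(\xi,x,0)}\bigr)\Bigr\}.
\]
Expanding $\ln F(z)=\gamma t\sum_{l\ge1}\frac{m_{l}(n(\xi,x,0))}{l!}(z-1)^{l}$ and matching it term by term against the cumulant expansion $\ln F(z)=\sum_{l\ge1}\frac{\chi_{l}}{l!}(z-1)^{l}$ yields the identity. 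Without some version of this step the proof does not get off the ground.

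That said, everything you do after assuming the identity is correct and is in fact more careful than the paper: the conditioning step $m_{l}(n(\xi,x,0))=\frac{1}{t}\int_{0}^{t}m_{l}(s,x,0)\,ds$, the cancellation of the factor $t$, the Tonelli justification for exchanging $\sum_{x}$ with $\int_{0}^{t}$ via non-negativity of factorial moments, and the observation that monotonicity in $t$ follows from the non-negative integrand are exactly the (unstated) route from the Lemma to Corollary~\ref{chi_l(N)}; your closing bound $\gamma c^{l}l!/\Delta$ also matches the corollary that follows in the paper. The deficiency is thus localized: supply the generating-function argument above and the proof is complete.
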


From Corollary \ref{m_k(t,x;0)} and Corollary \ref{chi_l(N)} we
obtain 

\begin{cor}
\[
\chi_{l}(N(t,0))\le c^{l}l!\gamma\int_{0}^{t}e^{-\Delta s}p(s,x,0)\,ds\le c^{l}l!\frac{\gamma}{\Delta}.
\]
\end{cor}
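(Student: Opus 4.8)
The plan is to obtain this bound as an immediate consequence of the two preceding corollaries, so the argument reduces to a single substitution followed by an elementary integration. First I would invoke Corollary~\ref{chi_l(N)}, which gives the exact representation
\[
\chi_{l}(N(t,0))=\gamma\int_{0}^{t}\sum_{x\in\mathbb{Z}^{d}}m_{l}(s,x,0)\,ds.
\]
This already reduces the problem to controlling the spatially summed factorial moment $\sum_{x}m_{l}(s,x,0)$ uniformly in the integration variable $s$.

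Next I would apply the summed estimate from Corollary~\ref{m_k(t,x;0)}, namely $\sum_{x\in\mathbb{Z}^{d}}m_{l}(s,x,0)\le c^{l}l!\,e^{-\Delta s}$. This estimate itself follows from the pointwise bound $m_{l}(s,x,0)\le c^{l}l!\,e^{-\Delta s}p(s,x,0)$ together with the normalization $\sum_{x}p(s,x,0)=1$ recorded in Eq.~\eqref{p(t,x,y)-2-1}. Inserting it into the integral and factoring out the constant $c^{l}l!$ yields
\[
\chi_{l}(N(t,0))\le c^{l}l!\,\gamma\int_{0}^{t}e^{-\Delta s}\,ds.
\]

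Finally I would evaluate the remaining integral. Since $\Delta>0$ by the subcriticality assumption~\eqref{Delta}, one has $\int_{0}^{t}e^{-\Delta s}\,ds=(1-e^{-\Delta t})/\Delta\le 1/\Delta$, which gives the claimed uniform-in-$t$ bound $\chi_{l}(N(t,0))\le c^{l}l!\,\gamma/\Delta$. There is no genuine obstacle here: the full analytic content sits in the two earlier corollaries, and the only point worth flagging is that the factor $p(s,x,0)$ appearing in the middle expression of the statement should be understood as already absorbed by the spatial summation, since it is precisely the normalization $\sum_{x}p(s,x,0)=1$ that renders the bound independent of $x$. Thus the middle term is to be read simply as $c^{l}l!\,\gamma\int_{0}^{t}e^{-\Delta s}\,ds$. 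Together with the monotonicity in $t$ from Corollary~\ref{chi_l(N)}, this uniform bound is exactly what is needed to pass to the limit $t\to\infty$ and then apply the Carleman condition in the proof of Theorem~\ref{main result}.
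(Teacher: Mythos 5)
Your proof is correct and is exactly the argument the paper intends: the corollary is stated as an immediate consequence of Corollary~3 (the summed bound $\sum_{x}m_{l}(s,x,0)\le c^{l}l!e^{-\Delta s}$) and Corollary~5 (the integral representation of $\chi_{l}(N(t,0))$), followed by the elementary integration $\int_{0}^{t}e^{-\Delta s}\,ds\le 1/\Delta$. Your observation that the stray factor $p(s,x,0)$ in the middle expression should be read as already absorbed by the spatial summation via $\sum_{x}p(s,x,0)=1$ is also the right way to interpret that (slightly garbled) intermediate term.
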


The last gives an upper bound uniformly in $t$ for the cumulants
of total population $N(t,0)$. Using this and the monotonicity in
$t$ of the cumulants of the total population $\chi_{l}\left(N(t,\cdot)\right)$
,we conclude the existence and boundedness of $\chi_{l}\left(N(\infty,\cdot)\right)$:
\begin{equation}
\chi_{l}\left(N(\infty,\cdot)\right)\le C^{l}l!.\label{bound chi_k N(infty,y)}
\end{equation}

Finally we may conclude that the behaviour in the limit of the cumulants
of the total population $\chi_{l}\left(N(\infty,y)\right)$ determines
uniquely the limit distribution of $N(\infty,y)$, $y\in\mathbb{Z}^{d}$.
In other words, the classic problem of moments \cite{Feller} does
not take place in this situation. The upper boundary in Eq.~\eqref{bound chi_k N(infty,y)}
implies that the log-generating function for $N(\infty,\cdot)$ is
analytical in some neighbourhood of $z=1$, which is why the sequence
of $\chi_{l}\left(N(\infty,\cdot)\right)$ uniquely determines the
probability distribution of $N(\infty,\cdot)$ \cite[Chapter~VII, S~6]{Feller}.
Traditionally, these conditions on the sequence of moments or cumulant
that are sufficient for the existence of a uniquely determined distribution
law are called the Carleman conditions. 

\noindent\textbf{\textit{Remark:}}
\begin{itemize}
\item Similar to the discussion in our previous work \cite{CE}, one can
perform similar analysis in the case when $0<\Delta^{-}\le\Delta(x)=\mu(x)-\sum_{l=2}^{\infty}(l-1)\beta_{l}(x)\le\Delta^{+}<\infty.$
The proof of boundedness of cumulants and moments will be similar
and we can prove a result analogous to Theorem \ref{main result}
and there is a limiting distribution in this case as well. 
\end{itemize}

\section{Conclusion}

We considered a subcritical contact branching random walk on the lattice
with the arbitrary number of offspring and with immigration. We showed
that, if the rate of mortality is larger than the average number of
new particles per unit time (subcritical case), and the tail of the
distribution of the number of offspring decreases at least geometrically,
then the probability distribution of the population converges to a
limiting distribution.

\end{document}